\numberwithin{equation}{section}
\theoremstyle{plain}
\newtheorem{Theorem}{Theorem}[section]
\newtheorem{Lemma}[Theorem]{Lemma}
\theoremstyle{definition}
\newcommand{\E}{\mathbb{E}}
\newcommand{\tr}{\operatorname{tr}}
\newcommand{\mN}{\mathcal{N}}
\newcommand{\ba}{\mathbf{a}}
\newcommand{\bv}{\mathbf{v}}
\newcommand{\bu}{\mathbf{u}}
\newcommand{\bU}{\mathbf{U}}
\newcommand{\bx}{\mathbf{x}}
\newcommand{\bX}{\mathbf{X}}
\newcommand{\bZ}{\mathbf{Z}}
\newcommand{\bI}{\mathbf{I}}
\newcommand{\bxi}{\bm{\xi}}
\newcommand{\btheta}{\bm{\theta}}
\newcommand{\cmmnt}[1]{\ignorespaces}
\begin{document}

\title{Convergence guarantees for forward gradient descent in the linear regression model}
\author{%
    Thijs Bos\footnotemark[1] \footnotemark[3] %
    \ \ and Johannes Schmidt-Hieber\footnotemark[2] \footnotemark[3]%
}
  \footnotetext[1]{Leiden University} 
  \footnotetext[2]{University of Twente}
  \footnotetext[3]{The research has been supported by the NWO/STAR grant 613.009.034b and the NWO Vidi grant VI.Vidi.192.021.}
\date{}
\maketitle

\begin{abstract}
Renewed interest in the relationship between artificial and biological neural networks motivates the study of gradient-free methods. Considering the linear regression model with random design, we theoretically analyze in this work the biologically motivated (weight-perturbed) forward gradient scheme that is based on random linear combination of the gradient. If $d$ denotes the number of parameters and $k$ the number of samples, we prove that the mean squared error of this method converges for $k\gtrsim d^2\log(d)$ with rate $d^2\log(d)/k.$ Compared to 
the dimension dependence $d$ for stochastic gradient descent, an additional factor $d\log(d)$ occurs.
\end{abstract}

\textbf{Keywords:} Convergence rates, estimation, gradient descent, linear model, zeroth-order methods.

\textbf{MSC 2020:} Primary: 62L20; secondary: 62J05


\section{Introduction}

Looking at the past developments, it is apparent that artificial neural networks (ANNs) became more powerful the more they resembled the brain. It is therefore anticipated that the future of AI is even more biologically inspired. As in the past, the bottlenecks towards more biologically inspired learning are computational barriers. For instance, shallow networks only became computationally feasible after the backpropagation algorithm was proposed. Deep neural networks were proposed for a longer time but deep learning only became scalable to large datasets after the introduction of large scale GPU computing. Neuromorphic computing aims to imitate the brain on computer chips, but is currently not fully scalable due to computational barriers.

The mathematics of AI has focused on explaining the state-of-the-art performance of modern machine learning methods and empirically observed phenomena such as the good generalization properties of extreme overparametrization. To shape the future of AI, statistical theory needs more emphasis on anticipating future developments. This includes proposing and analyzing biologically motivated methods already at a stage before scalable implementations exist. 

This work aims to analyze a biologically motivated learning rule building on the renewed interest of the differences and similarities between ANNs and biological neural networks (BNNs) \cite{BackpropAndBrain,schmidt2023interpreting,LocalHebbianPlasticity} which are rooted in the foundational literature from the 1980s \cite{GROSSBERG198723, ExcitementCrick}. A key difference between ANNs and BNNs is that ANNs are usually trained based on a version of (stochastic) gradient descent, while this seems prohibitive for BNNs. Indeed, to compute the gradient, knowledge of all parameters in the network is required, but biological networks do not posses the capacity to transport this information to each neuron. This suggests that biological networks cannot directly use the gradient to update their parameters \cite{ExcitementCrick,BackpropAndBrain, FundamentalsCompNeuro}.

The brain still performs well without gradient descent and can learn tasks with much fewer examples than ANNs. This sparks interest in biologically plausible learning methods that do not require (full) access of the gradient. Such methods are called derivative-free. A simple example of a derivative-free method is to randomly sample in each step a new parameter. If this decreases the loss, one keeps the new parameter and otherwise discards it without updating step. There is a wide variety of derivative-free strategies \cite{IntroDerivativeFreeConnea, DerivativeFreeLarsonea, IntroStochasticSearch}. Among those, so-called zeroth-order methods use evaluations of the loss function to build a noisy estimate of the gradient. This substitute is then used to replace the gradient in the gradient descent routine \cite{liu2020primer,ZeroOrderRates}. \cite{schmidt2023interpreting} establishes a connection between the Hebbian learning underlying the local learning of the brain (see e.g. Chapter 6 of \cite{FundamentalsCompNeuro}) and a specific zeroth-order method. A statistical analysis of this zeroth-order scheme is provided in the companion article \cite{SHKoolen2023}.

In this article, we study
(weight-perturbed) forward gradient descent. This method is motivated by biological neural networks \cite{baydin2022gradients, ren2022scaling} and lies between full gradient descent methods and derivative-free methods, as only random linear combination of the gradient are required. The form of the random linear combination is related to zeroth-order methods, see Section \ref{S: Model Description}. Settings with partial access to the gradient have been studied before. For example, \cite{GradientFreeMinimization} proposes a learning method based on directional derivatives for convex functions. In this work, we specifically  derive theoretical guarantees for forward gradient descent in the linear regression model with random design. Theorem \ref{T: Expectations} establishes an expression for the expectation. A bound on the mean squared error is provided in Theorem \ref{T:Convergence bound}.

The structure of the paper is as follows. In Section \ref{S: Model Description} we describe the forward gradient descent update rule in the linear regression model. Results are in Section \ref{S: Main Results} and the corresponding proofs can be found in Section \ref{S: Proofs}.

\textbf{Notation:} Vectors are denoted by bold letters and we write $\|\cdot\|_2$ for the Euclidean norm. We denote the largest and smallest eigenvalue of a matrix $A$ by the respective expressions $\lambda_{\max}(A)$ and $\lambda_{\min}(A)$. The spectral norm is $\|A\|_S:=\sqrt{\lambda_{\max}(A^\top A)}.$
The condition number of a positive semi-definite matrix $B$ is $\kappa(B):=\lambda_{\max}(B)/\lambda_{\min}(B).$ For a random variable $U,$ we denote the expectation with respect to $U$ by $\mathbb{E}_U.$ The symbol $\mathbb{E}$ 
 stands for an expectation taken with respect to all random variables that are inside that expectation. The (multivariate) normal distribution with mean vector $\mu$ and covariance matrix $\Sigma$ is denoted by $\mN(\mu,\Sigma).$

\section{Weight-perturbed forward gradient descent}\label{S: Model Description}

Suppose we want to learn a parameter vector $\btheta$ from training data $(\bX_1,Y_1),(\bX_2,Y_2),\ldots \in \mathbb{R}^{d}\times \mathbb{R}.$ Stochastic gradient descent (SGD) is based on the iterative update rule
\begin{equation}
    \btheta_{k+1}=\btheta_k-\alpha_{k+1}\nabla L(\btheta_k), \quad k=0,1,\ldots
    \label{eq.SGD}
\end{equation}
with $\btheta_0$ some initial value and $L(\btheta_k):=L(\btheta_k,\bX_k,Y_k)$ a loss that depends on the data only through the $k$-th sample $(\bX_k, Y_k)$.

For a standard normal random vector $\bxi_{k+1}\sim \mN(0,\bI_d)$ that is independent of all the other randomness, the quantity $(\nabla L(\btheta_k))^\top \bxi_{k+1} \bxi_{k+1}$ is called the (weight-perturbed) forward gradient \cite{baydin2022gradients, ren2022scaling}. {\it (Weight-perturbed) forward gradient descent} is then given by the update rule
\begin{equation}\label{Eq: Update rule}
    \btheta_{k+1}=\btheta_k-\alpha_{k+1}\big(\nabla L(\btheta_k)\big)^\top \bxi_{k+1} \bxi_{k+1}, \quad k=0,1,\ldots
\end{equation}

Assuming that the exogenous noise has unit variance is sufficient. Indeed, generalizing to $\bxi_{k+1}\sim \mN(0,\sigma^2 \bI_d)$ with variance parameter $\sigma^2$ has the same effect as rescaling the learning rate $\alpha_{k+1}\to \sigma^{-2}\alpha_{k+1}.$

Since for a deterministic $d$-dimensional vector $\bv,$ one has $\E[\bv^t\bxi_{k+1}\bxi_{k+1}]=\bv,$ taking the expectation of the weight-perturbed forward gradient descent scheme with respect to the exogenous randomness induced by $\bxi_1,\bxi_2,\ldots$ gives 
\begin{align}
    \E_{(\bxi_i)_{i\geq 1}}[\btheta_{k+1}]=\E_{(\bxi_i)_{i\geq 1}}[\btheta_k]-\alpha_{k+1} \E_{(\bxi_i)_{i\geq 1}}[\nabla L(\btheta_k)],  
    \label{eq.fGD_expectation}
\end{align}
resembling the SGD dynamic \eqref{eq.SGD}. If $\nabla L(\btheta_k)$ depends on $\btheta_k$ linearly then also $\E_{(\bxi_i)_{i\geq 1}}[\nabla L(\btheta_k)]= \nabla L(\E_{(\bxi_i)_{i\geq 1}}[\btheta_k]).$

While in expectation, forward gradient descent is related to SGD, the induced randomness of the $d$-dimensional random vectors $\bx_{k+1}$ induces a large amount of noise. To control the high noise level in the dynamic is the main obstacle in the mathematical analysis. One of the implications is that one has to make small steps by choosing a small learning rate to avoid completely erratic behavior. This particularly effects the first phase of the learning.

First order multivariate Taylor expansion shows that $L(\btheta_k+\bxi_k)-L(\btheta_k)$ and $(\nabla L(\btheta_k))^\top \bxi_{k+1}$ are close. Therefore, forward gradient descent is related to the zeroth-order method
\begin{equation}
    \btheta_{k+1}=\btheta_k-\alpha_{k+1} \big(L(\btheta_k+\bxi_k)-L(\btheta_k)\big)\bxi_k,
    \label{eq.8yegf}
\end{equation}
\cite{liu2020primer}. Consequently, forward gradient descent can be viewed as an intermediate step between gradient descent, with full access to the gradient, and zeroth-order methods that are solely based on (randomly) perturbed function evaluations.

\begin{figure}[!ht]
\begin{center}
\includegraphics[width=0.98\linewidth]{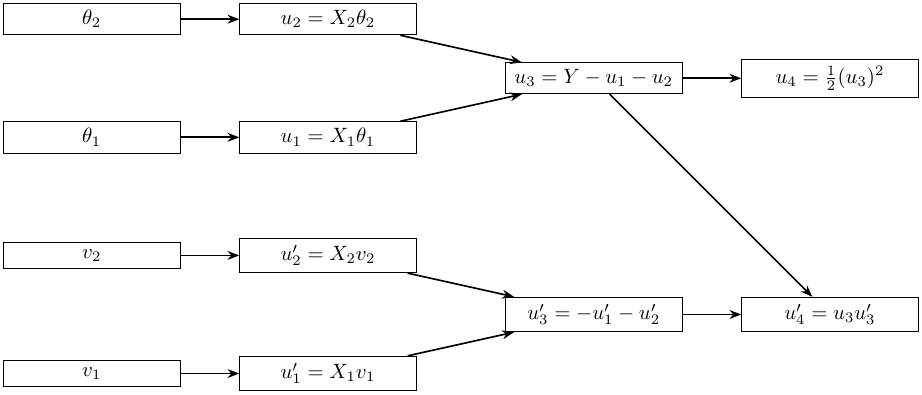}
\end{center}
\caption{Computional graphs for computing in a forward pass $L(\btheta)=\frac{1}{2}(Y-X_1\theta_1-X_2\theta_2)^2$ (upper half) and $(\nabla L(\btheta))^\top\bv$ (lower half).
}
\label{fig:ComputionalGraphs}
\end{figure}

We now comment on the biological plausibility of forward gradient descent. As mentioned in the introduction, it is widely accepted that the brain cannot perform (full) gradient descent. The backpropagation algorithm decomposes the computation of the gradient in a forward pass and a backward pass. The forward pass evaluates the loss for a training sample by sending signal through the network. This is biologically plausible. For a given vector $\bv$, it is even possible to compute both $L(\btheta_k)$ and $\big(\nabla L(\btheta_k)\big)^\top \bv$ in one forward pass, \cite{baydin2022gradients,ren2022scaling,JMLR:v18:17-468}. The construction can be conveniently explained for two variables $\btheta=(\theta_1,\theta_2)^\top,$ see Figure \ref{fig:ComputionalGraphs}. The loss function $L(\btheta)=\frac{1}{2}(Y-X_1\theta_1-X_2\theta_2)^2$ is implemented by first computing $u_1=X_1\theta_1$ and $u_2=X_2\theta_2$ in parallel. Subsequently, one can infer $u_3=Y-u_1-u_2=Y-X_1\theta_1-X_2\theta_2$ and $u_4=\frac{1}{2}(u_3)^2=L(\btheta).$ For a given vector $\bv=(v_1,v_2)^\top,$ the update value $(\nabla L(\btheta))^\top\bv$ in the forward gradient descent routine can be computed from $v_1,v_2,$ and $u_3=Y-X_1\theta_1-X_2\theta_2.$
Indeed, after computing $X_1v_1$  and $X_2v_2$ in a first step, one can compute $u_3'=-X_1v_1-X_2v_2$ and finally $u_4'=u_3u_3'=(Y-X_1\theta_1-X_2\theta_2)(-X_1v_1-X_2v_2)=-(Y-\bX^\top \btheta)\bX^\top \bv=(\nabla L(\btheta))^\top\bv.$ For more background on the implementation, see for instance \cite{JMLR:v18:17-468}.

In \cite{schmidt2023interpreting}, it has been shown that under appropriate conditions, Hebbian learning of excitatory neurons in biological neural networks leads to a zeroth-order learning rule that has the same structure as \eqref{eq.8yegf}.

To complete this section, we briefly compare forward gradient descent with feedback alignment as both methods are motivated by biological learning and are based on additional randomness.  Inspired by biological learning, feedback alignment proposes to replace the learned weights in the backward pass by random weights chosen at the start of the training procedure \cite{lillicrap2016random, BackpropAndBrain}. The so-called direct feedback alignment method goes even further: instead of back-propagating the gradient through all the layers of the network by the chain-rule, layers are updated with the gradient of the output layer multiplied with a fixed random weight matrix \cite{NIPS2016_d490d7b4, NEURIPS2020_69d1fc78}. (Direct) feedback alignment causes the forward weights to change in such a way that the true gradient of the network weights and the substitutes used in the update rule become more aligned \cite{lillicrap2016random,NIPS2016_d490d7b4,BackpropAndBrain}. The linear model can be viewed as neural network without hidden layers. The absence of layers means that in the backward step, no weight information is transported between different layers. As a consequence, both feedback alignment and direct feedback alignment collapse in the linear model into standard gradient descent. The conclusion is that feedback alignment and forward gradient descent are not comparable. The argument also shows that to unveil nontrivial statistical properties of feedback alignment, one has to go beyond the linear model. We leave the statistical analysis as an open problem.

\section{Convergence rates in the linear regression model}\label{S: Main Results}

We analyze weight-perturbed forward gradient descent for data generated from the $d$-dimensional linear regression with Gaussian random design. In this framework, we observe i.i.d.\ pairs $(\bX_i,Y_i)\in \mathbb{R}^d\times\mathbb{R},$ $i=1,2,\ldots$ satisfying
\begin{equation}\label{Eq: Regression model}
    \bX_i\sim \mN(0,\Sigma), \quad Y_i=\bX_i^\top\btheta_{\star}+\epsilon_i, \quad i=1,2,\ldots
\end{equation}
with $\btheta_{\star}$ the unknown $d$-dimensional regression vector, $\Sigma$ an unknown covariance matrix, and independent noise variables $\epsilon_i$ with mean zero and variance one.

For the analysis, we consider the squared loss $L(\btheta_k,\bX_k,Y_k)=\tfrac 12 (Y_k- \bX_k^\top\btheta_k)^2.$ The gradient is given by
\begin{align}
    \nabla L(\btheta_k) 
    = -\big(Y_k-\bX_k^\top \btheta_k \big)\bX_k.
    \label{eq.grad_loss}
\end{align}

We now analyze the forward gradient estimator assuming that the initial value $\btheta_0$ can be random or deterministic but should be independent of the data. We employ a similar proving strategy as in the recent analysis of dropout in the linear model in \cite{2023arXiv230610529C}. In particular, we will derive a recursive formula for $\mathbb{E}\left[(\btheta_k-\btheta_{\star})(\btheta_k-\btheta_{\star})^\top\right].$ In contrast to this work, we consider a different form of noise and non-constant learning rates.

The first result shows that forward gradient descent does gradient descent in expectation.

\begin{Theorem}\label{T: Expectations}
We have $\mathbb{E}[\btheta_{k}]-\btheta_{\star}=\big(\bI_d-\alpha_k\Sigma\big)\big(\mathbb{E}[\btheta_{k-1}]-\btheta_{\star}\big)$ and thus
\begin{equation}\label{Eq: Expectation single entry squared loss covariance matrix case non recursive}
    \mathbb{E}[\btheta_k]=\btheta_{\star}+\left(\prod_{\ell=1}^k(\bI_d-\alpha_{\ell}\Sigma)\right)\big(\mathbb{E}[\btheta_0]-\btheta_{\star}\big).
\end{equation}

\end{Theorem}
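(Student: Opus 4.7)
The plan is to verify the one-step recursion $\mathbb{E}[\btheta_k] - \btheta_{\star} = (\bI_d - \alpha_k\Sigma)(\mathbb{E}[\btheta_{k-1}] - \btheta_{\star})$; the closed-form \eqref{Eq: Expectation single entry squared loss covariance matrix case non recursive} then follows by iterating this identity down to $k=0$.

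Starting from the update rule \eqref{Eq: Update rule} with indices shifted by one, I would take expectations conditionally on all randomness other than $\bxi_k$. Since $\bxi_k\sim\mN(0,\bI_d)$ is independent of $\btheta_{k-1}$ and of the data sample fed into the $k$-th update, and $\mathbb{E}[\bxi_k\bxi_k^\top]=\bI_d$, one has $\mathbb{E}_{\bxi_k}[(\nabla L(\btheta_{k-1}))^\top \bxi_k\,\bxi_k] = \nabla L(\btheta_{k-1})$ (the identity highlighted just before \eqref{eq.fGD_expectation}). Taking outer expectations then reduces matters to showing $\mathbb{E}[\nabla L(\btheta_{k-1})] = \Sigma(\mathbb{E}[\btheta_{k-1}] - \btheta_{\star})$.

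For that step, I would substitute \eqref{eq.grad_loss} and rewrite $Y - \bX^\top\btheta_{k-1} = \bX^\top(\btheta_{\star} - \btheta_{k-1}) + \epsilon$ using the regression model \eqref{Eq: Regression model}, with $(\bX,Y,\epsilon)$ denoting the sample used at step $k$. Mean-zero-ness of $\epsilon$ and its independence from $\bX$ kill the noise cross term, while independence of $(\bX,Y)$ from $\btheta_{k-1}$ lets me pull the expectation through to obtain $\mathbb{E}[\bX\bX^\top(\btheta_{\star} - \btheta_{k-1})] = \Sigma(\btheta_{\star} - \mathbb{E}[\btheta_{k-1}])$. Combined with the sign in \eqref{eq.grad_loss}, this yields the desired expression for $\mathbb{E}[\nabla L(\btheta_{k-1})]$; subtracting $\btheta_{\star}$ from both sides of $\mathbb{E}[\btheta_k] = \mathbb{E}[\btheta_{k-1}] - \alpha_k \mathbb{E}[\nabla L(\btheta_{k-1})]$ then gives the one-step recursion, and iterating finishes the proof.

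The calculation is essentially routine, really just the fact that in expectation forward gradient descent coincides with SGD on a quadratic loss, so no step is a genuine obstacle. The only care point is the independence bookkeeping: one must argue cleanly that the perturbation $\bxi_k$, the current training sample, and the iterate $\btheta_{k-1}$ are mutually independent, since the whole argument pivots on being able to factor expectations along those lines. That is the only place where a careless write-up could go wrong.
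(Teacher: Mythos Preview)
Your proposal is correct and mirrors the paper's proof almost step for step: both average out $\bxi_k$ first via $\mathbb{E}[\bxi_k\bxi_k^\top]=\bI_d$, then expand $\nabla L(\btheta_{k-1})$ using the model \eqref{Eq: Regression model} and the independence of $(\bX_{k-1},\epsilon_{k-1})$ from $\btheta_{k-1}$ to obtain $\mathbb{E}[\nabla L(\btheta_{k-1})]=\Sigma(\mathbb{E}[\btheta_{k-1}]-\btheta_\star)$, subtract $\btheta_\star$, and iterate. The paper phrases the middle step via the conditional expectation $\mathbb{E}[\nabla L(\btheta_{k-1})\mid\btheta_{k-1}]$ rather than a direct factorization, but that is purely cosmetic.
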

The proof does not exploit the Gaussian design and only requires that $\bX_{i}$ is centered and has covariance matrix $\Sigma$. The exogenous randomness induced by $\bxi_1,\bxi_2,\ldots$ disappears in the expected values but heavily influences the recursive expressions for the squared expectations.

\begin{Theorem}\label{T:Covariance}
    Consider forward gradient descent \eqref{Eq: Update rule}. If $A_k:=\mathbb{E}\left[(\btheta_k-\btheta_{\star})(\btheta_k-\btheta_{\star})^\top\right],$ then
        \begin{equation*}
    \begin{aligned}
        A_k=&(\bI_d-\alpha_{k}\Sigma)A_{k-1}(\bI_d-\alpha_k\Sigma)\\
        &+3\alpha_k^2\Sigma A_{k-1}\Sigma+2\alpha_{k}^2\mathbb{E}\big[(\btheta_{k-1}-\btheta_{\star})^\top\Sigma(\btheta_{k-1}-\btheta_{\star})\big]\Sigma+2\alpha_{k}^2\Sigma\\
        &+2\alpha_k^2\tr\big(\Sigma A_{k-1}\Sigma\big)\bI_d+\alpha_{k}^2\mathbb{E}\big[(\btheta_{k-1}-\btheta_{\star})^\top\Sigma(\btheta_{k-1}-\btheta_{\star})\big]\tr\big(\Sigma\big)\bI_d\\
        &+\alpha_k^2\tr(\Sigma)\bI_d.
    \end{aligned}
\end{equation*}
\end{Theorem}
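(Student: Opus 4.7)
Write $\Delta_k := \btheta_k - \btheta_\star$ throughout, so that $A_k = \mathbb{E}[\Delta_k \Delta_k^\top]$. Substituting \eqref{eq.grad_loss} and $Y_{k-1} = \bX_{k-1}^\top\btheta_\star + \epsilon_{k-1}$ into \eqref{Eq: Update rule} gives the recursion $\Delta_k = \Delta_{k-1} - \alpha_k g_{k-1}$, where
\begin{equation*}
g_{k-1} := \bxi_k \bxi_k^\top \nabla L(\btheta_{k-1}), \qquad \nabla L(\btheta_{k-1}) = \bigl(\bX_{k-1}^\top \Delta_{k-1} - \epsilon_{k-1}\bigr) \bX_{k-1}.
\end{equation*}
Taking the outer product and expectation yields
\begin{equation*}
A_k = A_{k-1} - \alpha_k\,\mathbb{E}\bigl[\Delta_{k-1} g_{k-1}^\top + g_{k-1}\Delta_{k-1}^\top\bigr] + \alpha_k^2\,\mathbb{E}\bigl[g_{k-1} g_{k-1}^\top\bigr].
\end{equation*}
I evaluate both expectations by conditioning on $\Delta_{k-1}$, which is independent of the fresh batch $(\bX_{k-1}, \epsilon_{k-1}, \bxi_k)$.

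For the cross term, the identities $\mathbb{E}_{\bxi_k}[\bxi_k\bxi_k^\top] = \bI_d$, $\mathbb{E}[\bX_{k-1}\bX_{k-1}^\top] = \Sigma$, and $\mathbb{E}[\epsilon_{k-1}] = 0$ immediately give $\mathbb{E}[g_{k-1}\mid \Delta_{k-1}] = \Sigma \Delta_{k-1}$, so this term contributes $-\alpha_k(A_{k-1}\Sigma + \Sigma A_{k-1})$. I will then apply the algebraic identity
\begin{equation*}
(\bI_d - \alpha_k\Sigma)\, A_{k-1}\,(\bI_d - \alpha_k\Sigma) = A_{k-1} - \alpha_k(A_{k-1}\Sigma + \Sigma A_{k-1}) + \alpha_k^2\,\Sigma A_{k-1}\Sigma
\end{equation*}
to fold the leading part of the recursion into the compact form displayed in the statement, at the cost of carrying along an extra $-\alpha_k^2\Sigma A_{k-1}\Sigma$ correction.

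For the quadratic term, set $v := \nabla L(\btheta_{k-1})$. Isserlis' theorem for $\bxi \sim \mN(0, \bI_d)$ (i.e.\ $\mathbb{E}[\bxi_i \bxi_j \bxi_k \bxi_l] = \delta_{ij}\delta_{kl} + \delta_{ik}\delta_{jl} + \delta_{il}\delta_{jk}$) yields the reusable identity
\begin{equation*}
\mathbb{E}_{\bxi}\bigl[\bxi\bxi^\top M \bxi \bxi^\top\bigr] = M + M^\top + \tr(M)\,\bI_d,
\end{equation*}
which with $M = vv^\top$ gives $\mathbb{E}_{\bxi_k}[g_{k-1} g_{k-1}^\top \mid v] = 2 vv^\top + \|v\|_2^2\, \bI_d$. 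Since $v = (\bX_{k-1}^\top \Delta_{k-1} - \epsilon_{k-1})\bX_{k-1}$, expanding $(\bX^\top\Delta - \epsilon)^2$ and using $\mathbb{E}[\epsilon_{k-1}] = 0$, $\mathbb{E}[\epsilon_{k-1}^2] = 1$ isolates the pure-noise pieces $2\alpha_k^2\Sigma$ and $\alpha_k^2 \tr(\Sigma)\bI_d$. The remaining Gaussian fourth moment $\mathbb{E}[(\bX^\top\Delta)^2 \bX\bX^\top\mid \Delta]$ is handled by reapplying the same Isserlis identity after substituting $\bX = \Sigma^{1/2}\bZ$ with $\bZ \sim \mN(0,\bI_d)$, yielding $2\Sigma\Delta\Delta^\top\Sigma + (\Delta^\top\Sigma\Delta)\Sigma$; the scalar $\mathbb{E}[(\bX^\top\Delta)^2\|\bX\|_2^2\mid \Delta]$ is then simply its trace, namely $2\Delta^\top\Sigma^2\Delta + (\Delta^\top\Sigma\Delta)\tr(\Sigma)$. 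Taking the outer expectation over $\Delta_{k-1}$ converts $\Delta\Delta^\top$ to $A_{k-1}$ and $\Delta^\top \Sigma^2 \Delta$ to $\tr(\Sigma A_{k-1}\Sigma)$. Combining with the completed-square step above and collecting the coefficient $4\alpha_k^2 - \alpha_k^2 = 3\alpha_k^2$ in front of $\Sigma A_{k-1}\Sigma$ reproduces the six summands on the right-hand side of the claim.

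The main obstacle is organizational rather than conceptual: two nested Gaussian fourth moments (one over $\bxi_k$ and one over $\bX_{k-1}$) each generate a rank-one piece and a trace piece, so the bookkeeping must track six distinct tensorial summands while carefully keeping the $\epsilon$-contributions separate.
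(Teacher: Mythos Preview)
Your proof is correct and follows essentially the same approach as the paper: expand $\Delta_k\Delta_k^\top$, handle the cross terms via $\mathbb{E}[g_{k-1}\mid\Delta_{k-1}]=\Sigma\Delta_{k-1}$, and evaluate the quadratic term by applying a Gaussian fourth-moment identity twice---once for $\bxi_k$ and once for $\bX_{k-1}$---before completing the square to extract $(\bI_d-\alpha_k\Sigma)A_{k-1}(\bI_d-\alpha_k\Sigma)$ and the $3\alpha_k^2$ coefficient. The only cosmetic difference is that the paper packages the fourth-moment computation as a lemma for $\bZ\sim\mN(0,\Gamma)$ with general $\Gamma$, whereas you state the Isserlis identity for standard normals and reduce the $\bX$-computation to that case via $\bX=\Sigma^{1/2}\bZ$.
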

Since $A_k$ depends on $\btheta_k^2$, the fourth moments of the design vectors $\bX_i$ and the exogenous random vectors $\bxi_k$ play a role in this equation.

The risk $\mathbb{E}\big[\|\btheta_k-\btheta_{\star}\|_2^2\big]$ is the trace of the matrix $A_k$. Setting
\begin{align*}
    \kappa(\Sigma):= \frac{\|\Sigma\|_S}{\lambda_{\min}(\Sigma)}
\end{align*}
for the condition number and building on Theorem \ref{T:Covariance}, we can establish the following risk bound for forward gradient descent.

\begin{Theorem}[Mean squared error]\label{T:Convergence bound} Consider forward gradient descent \eqref{Eq: Update rule} and assume that $\Sigma$ is positive definite. For constant $a>2, $ choosing the learning rate
\begin{equation}\label{Eq: Combined choice of alpha_k}
\begin{aligned}
    \alpha_k=\frac{a\lambda_{\min}(\Sigma)}{k\lambda^2_{\min}(\Sigma)+a\|\Sigma\|_S^2(d+2)^2},
    \quad k=1,2,\ldots,
\end{aligned}
\end{equation}
yields
\begin{equation*}
\begin{aligned}
        \mathbb{E}\big[\big\|\btheta_k-\btheta_{\star}\big\|_2^2\big]&\leq \Bigg(\frac{1+a\kappa^2(\Sigma)(d+2)^2}{k+a\kappa^2(\Sigma)(d+2)^2}\Bigg)^a\mathbb{E}\big[\big\|\btheta_{0}-\btheta_{\star}\big\|_2^2\big]
        +\frac{2ea  \kappa(\Sigma) (d+2)^2}{\lambda_{\min}(\Sigma)(k+a\kappa^2(\Sigma)(d+2)^2)}.
    \end{aligned}
\end{equation*}
\end{Theorem}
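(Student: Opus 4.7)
The plan is to distill the matrix recursion in Theorem \ref{T:Covariance} into a scalar inequality for $r_k := \mathbb{E}[\|\btheta_k-\btheta_{\star}\|_2^2]=\tr(A_k)$, and then close an induction matched to the learning-rate schedule in \eqref{Eq: Combined choice of alpha_k}.

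For the first step, I would take the trace of the matrix identity in Theorem \ref{T:Covariance}, using $\mathbb{E}[(\btheta_{k-1}-\btheta_{\star})^\top\Sigma(\btheta_{k-1}-\btheta_{\star})]=\tr(\Sigma A_{k-1})$ and $\tr(c\bI_d)=cd$. The $\alpha_k^2$ contributions then carry the coefficients $2(d+2)$ on $\tr(\Sigma A_{k-1}\Sigma)$, $(d+2)$ on $\tr(\Sigma A_{k-1})\tr(\Sigma)$, and $(d+2)$ on $\tr(\Sigma)$. Using $\lambda_{\min}(\Sigma)\bI_d \preceq \Sigma \preceq \|\Sigma\|_S\bI_d$ and $\tr(\Sigma) \leq d\|\Sigma\|_S$, the arithmetic collapse $2(d+2)+d(d+2)=(d+2)^2$ combines the two quadratic-in-$\alpha_k$ terms proportional to $r_{k-1}$, producing
\begin{align*}
    r_k \leq \bigl[1 - 2\alpha_k\lambda_{\min}(\Sigma) + \alpha_k^2(d+2)^2\|\Sigma\|_S^2\bigr]\, r_{k-1} + \alpha_k^2(d+2)^2\|\Sigma\|_S.
\end{align*}

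For the second step, I would abbreviate $\beta_k := k\lambda_{\min}^2(\Sigma) + a\|\Sigma\|_S^2(d+2)^2$, so that $\alpha_k = a\lambda_{\min}(\Sigma)/\beta_k$, $\beta_k-\beta_{k-1}=\lambda_{\min}^2(\Sigma)$, and $\beta_k=(k+a\kappa^2(\Sigma)(d+2)^2)\lambda_{\min}^2(\Sigma)$; in these variables the claimed bound is simply $r_k \leq (\beta_1/\beta_k)^a r_0 + 2ea(d+2)^2\|\Sigma\|_S/\beta_k$. The definition of $\alpha_k$ enforces $a\|\Sigma\|_S^2(d+2)^2 \leq \beta_k$, which upgrades the scalar recursion to
\begin{align*}
    r_k \leq \left(1-\frac{a\lambda_{\min}^2(\Sigma)}{\beta_k}\right) r_{k-1} + \frac{a^2\lambda_{\min}^2(\Sigma)(d+2)^2\|\Sigma\|_S}{\beta_k^2}.
\end{align*}
I would then prove the bound by induction on $k$. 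For the leading $(\beta_1/\beta_k)^a r_0$ term the induction step reduces to $1 - a\lambda_{\min}^2(\Sigma)/\beta_k \leq (\beta_{k-1}/\beta_k)^a$, which is the convexity inequality $(1-x)^a \geq 1-ax$ for $a\geq 1$, $x\in[0,1]$ at $x = \lambda_{\min}^2(\Sigma)/\beta_k$. For the noise term $M/\beta_k$ with $M = 2ea(d+2)^2\|\Sigma\|_S$, a short rearrangement using $\beta_k-\beta_{k-1}=\lambda_{\min}^2(\Sigma)$ and $\beta_{k-1}\leq\beta_k$ reduces the induction step to the single numerical requirement $2e(a-1) \geq a$, i.e.\ $a \geq 2e/(2e-1) \approx 1.226$; the hypothesis $a>2$ provides comfortable slack.

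I expect the only real obstacle to be the bookkeeping in the first step: Theorem \ref{T:Covariance} contains seven distinct matrix contributions, and the specific cancellation $2(d+2) + d(d+2) = (d+2)^2$ is what keeps the dimension factor at $d^2$ rather than $d^3$ in the final rate. Once that scalar recursion and the reparametrization via $\beta_k$ are in hand, the induction is of standard Robbins--Monro flavour, driven entirely by the convexity inequality plus one scalar comparison.
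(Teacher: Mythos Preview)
Your first step is exactly the paper's: both take the trace of Theorem~\ref{T:Covariance}, use the same eigenvalue bounds, and arrive at the identical scalar recursion
\[
r_k \leq \bigl(1 - 2\alpha_k\lambda_{\min}(\Sigma) + \alpha_k^2(d+2)^2\|\Sigma\|_S^2\bigr) r_{k-1} + \alpha_k^2(d+2)^2\|\Sigma\|_S,
\]
including the same collapse $2(d{+}2)+d(d{+}2)=(d{+}2)^2$ and the same simplification via $\alpha_k\|\Sigma\|_S^2(d{+}2)^2\leq \lambda_{\min}(\Sigma)$.

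Where you diverge is in closing the recursion. The paper unrolls it into a product--sum form, then controls the products by $1-x\leq e^{-x}$ together with integral comparison $\sum_\ell (\ell+c_d)^{-1}\geq \log((k{+}1{+}c_d)/(k^*{+}c_d))$, and handles the noise sum via a second integral estimate plus the trick $(1+1/a)^a\leq e$. You instead run a direct induction on the target bound, reducing the contraction part to Bernoulli's inequality $(1-x)^a\geq 1-ax$ and the noise part to the scalar condition $2e(a-1)\geq a$; I checked the rearrangement and it is correct. Your route is shorter and, incidentally, shows that the argument only needs $a>2e/(2e-1)\approx 1.23$, whereas the paper's integral comparison for $\sum (m+c_d)^{a-2}$ is stated for $p=a-2>0$ and so genuinely uses $a>2$. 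The paper's approach, on the other hand, yields the slightly sharper denominator $k+1+c_d$ before relaxing to $k+c_d$, and makes the dependence on the whole trajectory $(\alpha_\ell)_\ell$ explicit, which would be convenient if one wanted to analyze other step-size schedules.
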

Alternatively, the upper bound of Theorem \ref{T:Convergence bound} can be written as
\begin{equation*}
\begin{aligned}
        \mathbb{E}\big[\|\btheta_k-\btheta_{\star}\|_2^2\big]&\leq \Big(1-a^{-1}\lambda_{\min}(\Sigma)(k-1)\alpha_{k}\Big)^a\mathbb{E}\big[\|\btheta_{0}-\btheta_{\star}\|_2^2\big]+2e\kappa(\Sigma)(d+2)^2\alpha_{k}.
    \end{aligned}
\end{equation*}
In the upper bound, the risk $\mathbb{E}\big[\|\btheta_{0}-\btheta_{\star}\|_2^2\big]$ of the initial estimate $\btheta_0$ appears. A realistic scenario is that the entries of $\btheta_\star$ and $\btheta_0$ are all of order one. In this case, the inequality $\|\btheta_{0}-\btheta_{\star}\|_2^2\leq d\|\btheta_{0}-\btheta_{\star}\|_{\infty}^2$ shows that the risk of the initial estimate will scale with the number of parameters $d$. Taking $a=\log(d)$ (for $d\geq 8> e^2$ such that $a>\log(e^2)=2$), Theorem \ref{T:Convergence bound} implies that 
\begin{equation*}
    \mathbb{E}\big[\|\btheta_k-\btheta_{\star}\|_2^2\big]\lesssim d\Bigg(\frac{d^2\log(d)}{k}\Bigg)^{\log(d)}\mathbb{E}\big[\|\btheta_{0}-\btheta_{\star}\|_{\infty}^2\big]+\frac{d^2\log(d)}{k}.
\end{equation*}
For $k_\star=e^2d^2\log(d),$ $d^2\log(d)/k_\star=e^{-2}$ and $d(d^2\log(d)/k_\star)^{\log(d)}=1/d$. Since $d>e^2$, this means that $d\big(d^2\log(d)/k_\star\big)^{\log(d)}<d^2\log(d)/k_\star.$ Moreover, $k^{-\log(d)}$ tends faster to zero than $k^{-1}$ as $k\rightarrow\infty$. So, for $k\geq k_\star=e^2d^2\log(d),$
\begin{equation}
    d\Bigg(\frac{d^2\log(d)}{k}\Bigg)^{\log(d)}\mathbb{E}\big[\|\btheta_{0}-\btheta_{\star}\|_{\infty}^2\big]+\frac{d^2\log(d)}{k}\leq \frac{d^2\log(d)}{k}\Big(1+\mathbb{E}\big[\|\btheta_{0}-\btheta_{\star}\|_{\infty}^2\big]\Big).
    \label{eq.fnweirugib}
\end{equation}
The rate for $k\geq e^2d^2\log(d)$ is thus $d^2\log(d)/k.$ This means that forward gradient descent has dimension dependence $d^2\log(d).$ This is by a factor $d\log(d)$ worse than the minimax rate for the linear regression problem, \cite{OptimalRatesAggregation,RandomDesignRidge,ExactminimaxLinear}. In contrast, methods that have access to the gradient can achieve optimal dimension dependence in the rate, \cite{AccelerationAveraging,LSAHowFarConstantGo}. The obtained convergence rate is in line with results showing that for convex optimization problems zeroth-order methods have a higher dimension dependence, \cite{ZeroOrderRates, liu2020primer, GradientFreeMinimization}. 

We believe that faster convergence rates are obtainable if the same datapoint is assessed several times. This means that each data point is used for several updates of the forward gradient $\btheta_{k+1}=\btheta_k-\alpha_{k+1}\big(\nabla L(\btheta_k)\big)^\top \bxi_{k+1} \bxi_{k+1},$ for instance by running multiple epochs. However, in every iteration a new random direction $\bxi_{k+1}$ is sampled. We expect that if every data point is used $m\leq d$ times, one should be able to achieve the convergence rate $d^2/(km),$ up to some logarithmic terms. If this is true and if $m$ is of the order of $d,$ one could even recover the minimax rate $d/k.$ Using the same datapoints multiple times induces additional dependence among the parameter updates. To deal with this dependence is the key challenge to establish the convergence rate $d^2/(km)$.

Assuming that the covariance matrix $\Sigma$ is positive definite is standard for linear regression with random design \cite{RandomDesignRidge, ExactminimaxLinear,GMTheoremRandomDesign}.

For $k\gtrsim d^2,$ the decrease of the learning rate $\alpha_k$ is of the order $1/k$, which is the standard choice \cite{MR1993642, AveragedGyorfi, AdaptiveAlgoStochasticApprox}. A constant learning rate is used for Ruppert-Polyak averaging in \cite{AccelerationAveraging, AveragedGyorfi}.
For least squares linear regression, it is possible to achieve (near) optimal convergence with a constant (universal) stepsize \cite{bach2013non}. Conditions under which a constant (universal) stepsize in more general settings than linear least squares works or fails are investigated in \cite{LSAHowFarConstantGo}.

\begin{figure}[!ht]
\begin{center}
\begin{subfigure}[t]{0.49\textwidth}
\includegraphics[width=0.98\linewidth]{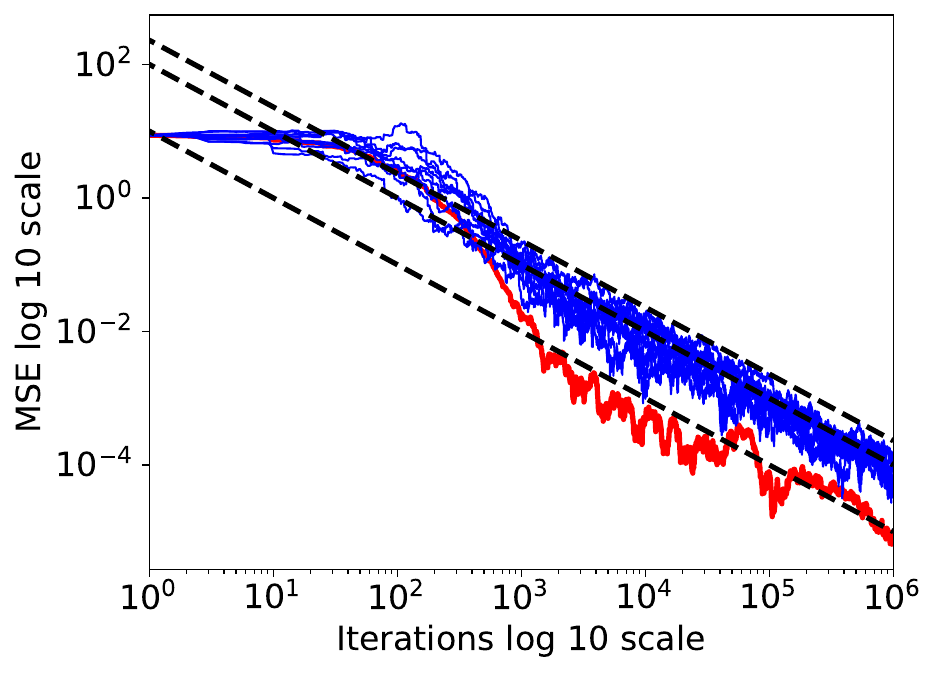}
\caption{$d=10$}
\label{fig:D10Numerical}
\end{subfigure}
\begin{subfigure}[t]{0.49\textwidth}
\includegraphics[width=0.98\linewidth]{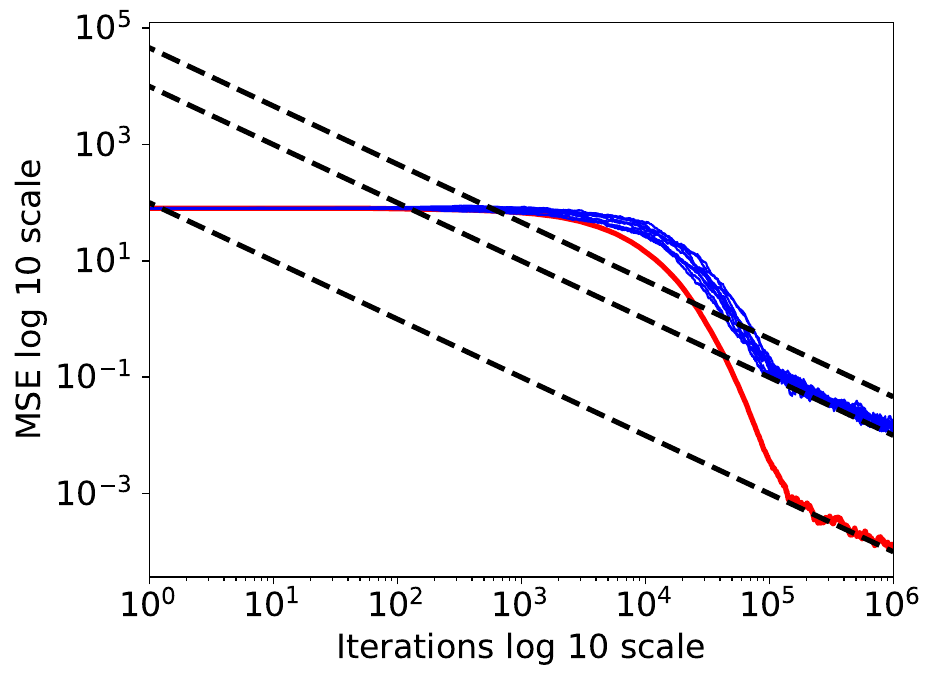}
\caption{$d=100$}
\label{fig:D100Numerical}
\end{subfigure}
\end{center}
\caption{Comparison of the MSE of forward gradient descent (blue) and SGD (red) for dimensions $d=10$ and $d=100.$ The upper dashed line is $k\mapsto d^2\log(d)/k$, the middle dashed line is $k\mapsto d^2/k$, and the lower dashed line is $k\mapsto d/k.$
}
\label{fig:NumericalResults}
\end{figure}

In a small simulation study, we investigated whether there is a discrepancy between the derived convergence rates and the empirical decay of the risk. For dimensions $d=10$ and $d=100$, data according to \eqref{Eq: Regression model} with $\Sigma=\bI_d$ are generated. On these data, we run ten times weight perturbed forward gradient descent \eqref{Eq: Update rule}, and compare the mean squared errors (MSEs) to one realization of SGD \eqref{eq.SGD}. For all simulations of forward gradient descent and SGD, we use the same initialization $\btheta_0$, drawn from a $\mN(0,\bI_d)$ distribution, and the learning rate $\alpha_k$ specified in \eqref{Eq: Combined choice of alpha_k} with $a=\log(d)$. Thus, only the random perturbation vectors $\bxi_{k}$ in the forward gradient descent schemes differ across different runs. The outcomes are reported in Figure \ref{fig:NumericalResults}. For each of the 10+1 simulations, we report on a log-log scale the MSE for the first one million iterations. The upper dashed line gives the derived convergence rate $k\mapsto d^2\log(d)/k$, the middle dashed line is $d^2/k$, and the lower dashed line is $d/k$. The ten paths from the ten forward gradient descent runs are shown in blue. The path from the SGD is displayed in red. We see three regimes. In the first regime, the risk remains nearly constant. For dimension $d=100,$ this is true up to the first ten thousand of iterations. Afterwards there is a sudden decrease of the risk. Eventually, for large number of iterations $k,$ the MSE of forward gradient descent concentrates near the line $k\mapsto d^2/k$, while the MSE of SGD concentrates around $k\mapsto d/k.$ This suggest that up to the $\log^2(d)$-factor, the derived theory does in fact describe the rate of the MSE. Equation \eqref{eq.fnweirugib} predicts that the rate $d^2\log(d)/k$ will occur for $k\geq k_\star=e^2d^2\log(d)$. For $d=10,$ $k_\star \approx 1.7\times 10^3$ and for $d=100,$ $k_\star \approx 3.4\times 10^5.$ Thus, in terms of orders of magnitude, there is a close agreement between theory and simulations. 

Starting with a good initializer that lies already in the neighborhood of the true parameter, one can avoid the long burn-in time in the beginning. Otherwise, it remains an open problem, whether one can modify the procedure such that also for smaller values of $k,$ the risk behaves more like $d^2\log(d)/k.$

Python code is available on Github \cite{Simulation_code_Convergence_2024}.

\section{Proofs}\label{S: Proofs}

\begin{proof}[Proof of Theorem \ref{T: Expectations}]
By \eqref{eq.grad_loss} and the linear regression model $Y_{k-1}=\bX_{k-1}^\top\btheta_{\star}+\epsilon_{k-1}$, we have 
\begin{align}
    \begin{split}
    \nabla L(\btheta_{k-1})
    &=-(Y_{k-1}-\bX_{k-1}^\top\btheta_{k-1})\bX_{k-1} \\
    &=-(\bX_{k-1}^\top(\btheta_\star-\btheta_{k-1})+\epsilon_{k-1})\bX_{k-1} \\
    &=-  \epsilon_{k-1} \bX_{k-1}- \bX_{k-1} \bX_{k-1}^\top(\btheta_\star-\btheta_{k-1}).    \end{split}
    \label{eq.grad_Ltheta}
\end{align}
Since $\E[\bX_{k-1} \bX_{k-1}^\top]=\Sigma,$  $\E[\epsilon_{k-1}]=0,$ and $\bX_{k-1},\epsilon_{k-1},\btheta_{k-1}$ are jointly independent, we obtain
\begin{align}
    \begin{split}
    \mathbb{E}\big[\nabla L(\btheta_{k-1})\, \big| \, \btheta_{k-1}\big]
    &= \mathbb{E}\big[ -  \epsilon_{k-1} \bX_{k-1}- \bX_{k-1} \bX_{k-1}^\top(\btheta_\star-\btheta_{k-1})\, \big| \, \btheta_{k-1}\big] \\
    &= -  \Sigma (\btheta_\star-\btheta_{k-1}).
    \end{split}
    \label{eq.grad_cond}
\end{align}
Combined with \eqref{eq.fGD_expectation}, we find
\begin{align*}
    \mathbb{E}\big[\btheta_k\big]
    &=\mathbb{E}\big[\btheta_{k-1}\big]-\alpha_k \mathbb{E}\big[\nabla L(\btheta_{k-1})\big] = \mathbb{E}\big[\btheta_{k-1}\big]+\alpha_k\Sigma \E\big[\btheta_\star-\btheta_{k-1}\big].
\end{align*}
The true parameter $\btheta_\star$ is deterministic. 
Subtracting $\btheta_\star$ on both sides, yields the claimed identity $\mathbb{E}[\btheta_{k}]-\btheta_{\star}=\big(\bI_d-\alpha_k\Sigma\big)\big(\mathbb{E}[\btheta_{k-1}]-\btheta_{\star}\big).$

\end{proof}

\subsection{Proof of Theorem \ref{T:Covariance}}
\begin{Lemma}\label{L: terms with quadratic normal random variables}
    If $\bZ\sim \mN(0,\Gamma)$ is a $d$-dimensional random vector and $\bU$ is a $d$-dimensional random vector that is independent of $\bZ,$ then
\begin{equation*}
   \mathbb{E}\big[(\bU^\top\bZ)^2\bZ\bZ^\top\big]=2\Gamma\mathbb{E}\big[\bU\bU^\top\big]\Gamma+\mathbb{E}\big[\bU^\top\Gamma\bU\big]\Gamma.
\end{equation*}
\end{Lemma}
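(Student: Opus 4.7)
The plan is to condition on $\bU$ and reduce to a pure Gaussian fourth-moment computation, then resum via Isserlis' theorem (Wick's formula), and finally take expectation over $\bU$.

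First I would fix a deterministic $\bu \in \mathbb{R}^d$ and compute $\mathbb{E}[(\bu^\top \bZ)^2 \bZ \bZ^\top]$ entrywise. Writing the $(i,j)$-entry as
\begin{equation*}
    \mathbb{E}\bigl[(\bu^\top \bZ)^2 Z_i Z_j\bigr] = \sum_{k,\ell=1}^d u_k u_\ell \, \mathbb{E}[Z_k Z_\ell Z_i Z_j],
\end{equation*}
Isserlis' theorem applied to the centered Gaussian $\bZ$ yields
\begin{equation*}
    \mathbb{E}[Z_k Z_\ell Z_i Z_j] = \Gamma_{k\ell}\Gamma_{ij} + \Gamma_{ki}\Gamma_{\ell j} + \Gamma_{kj}\Gamma_{\ell i}.
\end{equation*}
Summing against $u_k u_\ell$ and using the symmetry of $\Gamma$, the first term contributes $(\bu^\top \Gamma \bu)\Gamma_{ij}$, and each of the second and third terms contributes $(\Gamma \bu)_i (\Gamma \bu)_j = (\Gamma \bu \bu^\top \Gamma)_{ij}$. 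In matrix form this is
\begin{equation*}
    \mathbb{E}\bigl[(\bu^\top \bZ)^2 \bZ \bZ^\top\bigr] = (\bu^\top \Gamma \bu)\,\Gamma + 2\,\Gamma \bu \bu^\top \Gamma.
\end{equation*}

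Next, since $\bU$ is independent of $\bZ$, the tower property gives
\begin{equation*}
    \mathbb{E}\bigl[(\bU^\top \bZ)^2 \bZ \bZ^\top\bigr] = \mathbb{E}\Bigl[\,\mathbb{E}\bigl[(\bu^\top \bZ)^2 \bZ \bZ^\top\bigr]\big|_{\bu = \bU}\,\Bigr] = \mathbb{E}[\bU^\top \Gamma \bU]\,\Gamma + 2\,\Gamma\,\mathbb{E}[\bU \bU^\top]\,\Gamma,
\end{equation*}
which is the claimed identity.

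There is no real obstacle here; the step that requires the most care is the bookkeeping of the three Wick pairings and recognizing that two of them collapse to the same rank-one sandwich $\Gamma \bu \bu^\top \Gamma$ while the third produces the scalar factor $\bu^\top \Gamma \bu$. An equivalent alternative, if one prefers to avoid writing out indices, is to diagonalize by writing $\bZ = \Gamma^{1/2} \bW$ with $\bW \sim \mN(0,\bI_d)$, reduce to the standard Gaussian case, and invoke the identity $\mathbb{E}[\bW \bW^\top M \bW \bW^\top] = M + M^\top + \tr(M)\bI_d$ for deterministic $M$; after un-transforming this yields the same formula.
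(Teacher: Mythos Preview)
Your proof is correct and follows essentially the same route as the paper: both arguments compute the $(i,j)$-entry, apply Isserlis' theorem to obtain the three pairings $\Gamma_{k\ell}\Gamma_{ij}+\Gamma_{ki}\Gamma_{\ell j}+\Gamma_{kj}\Gamma_{\ell i}$, and identify the resulting sums as $(\bu^\top\Gamma\bu)\Gamma_{ij}$ and $2(\Gamma\bu\bu^\top\Gamma)_{ij}$. The only cosmetic difference is that you first fix $\bU=\bu$ and then take expectation via the tower property, whereas the paper directly factors $\mathbb{E}[U_kU_\ell]\,\mathbb{E}[Z_kZ_\ell Z_iZ_j]$ using independence; these are equivalent.
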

\begin{proof}
Because $\bU$ and $\bZ$ are independent,
the $(i,j)$-th entry of the $d\times d$ matrix $\mathbb{E}\big[(\bU^\top\bZ)^2\bZ\bZ^\top\big]$ is
\begin{equation*}
    \begin{aligned}
        \sum_{\ell,m=1}^d\mathbb{E}\big[U_{\ell}U_{m}\big]\mathbb{E}\big[Z_{\ell}Z_mZ_iZ_j\big].
    \end{aligned}
\end{equation*}
Since $\bZ\sim\mN(0,\Gamma),$
\begin{equation*}
    \begin{aligned}
        \mathbb{E}\big[Z_{\ell}Z_mZ_iZ_j\big]
        &=\Gamma_{\ell,m}\Gamma_{i,j}+\Gamma_{\ell,i}\Gamma_{m,j}+\Gamma_{\ell,j}\Gamma_{m,i},
    \end{aligned}
\end{equation*}
see for instance the example at the end of Section 2 in \cite{GaussianMoments}.
Thus
\begin{equation*}
    \begin{aligned}
        &\sum_{\ell,m=1}^d\mathbb{E}\big[U_{\ell}U_{m}\big]\mathbb{E}\big[Z_{\ell}Z_mZ_iZ_j\big]=\sum_{\ell,m=1}^d\mathbb{E}\big[U_{\ell}U_{m}\big]\big(\Gamma_{\ell,m}\Gamma_{i,j}+\Gamma_{\ell,i}\Gamma_{m,j}+\Gamma_{\ell,j}\Gamma_{m,i}\big).
    \end{aligned}
\end{equation*}
Because of
\begin{align*}
    \sum_{\ell,m=1}^d\mathbb{E}\big[U_{\ell}U_{m}\big]\Gamma_{\ell,m}\Gamma_{i,j}
    &=\sum_{\ell,m=1}^d\mathbb{E}\big[U_{\ell}\Gamma_{\ell,m}U_{m}\big]\Gamma_{i,j}=\mathbb{E}\left[\bU^\top\Gamma\bU\Gamma_{i,j}\right],\\
    \sum_{\ell,m=1}^d\mathbb{E}\big[U_{\ell}U_{m}\big]\Gamma_{\ell,i}\Gamma_{m,j}
    &=\sum_{\ell,m=1}^d\mathbb{E}\big[U_{\ell}\Gamma_{\ell,i}U_{m}\Gamma_{m,j}\big]=\mathbb{E}\left[\big(\bU^\top\Gamma\big)_i\big(\bU^\top\Gamma\big)_j\right],
\end{align*}
and
\begin{equation*}
    \sum_{\ell,m=1}^d\mathbb{E}\big[U_{\ell}U_{m}\big]\Gamma_{\ell,j}\Gamma_{m,i}= \sum_{\ell,m=1}^d\mathbb{E}\big[U_{m}\Gamma_{m,i}U_{\ell}\Gamma_{\ell,j}\big]=\mathbb{E}\left[\big(\bU^\top\Gamma\big)_i\big(\bU^\top\Gamma\big)_j\right],
\end{equation*}
the $(i,j)$-th entry of the matrix $\mathbb{E}\left[(\bU^\top\bZ)^2\bZ\bZ^\top\right]$ is
\begin{equation*}
    2\mathbb{E}\left[\big(\bU^\top\Gamma\big)_i\big(\bU^\top\Gamma\big)_j\right]+\mathbb{E}\left[\bU^\top\Gamma\bU\Gamma_{i,j}\right].
\end{equation*}
For a vector $\ba=(a_1,\ldots,a_d)^\top,$ the scalar $a_ia_j$ is the $(i,j)$-th entry of the matrix $\ba\ba^\top.$ Combined with the previous display, the result follows.
\end{proof}

\begin{proof}[Proof of Theorem \ref{T:Covariance}]
As Theorem \ref{T:Covariance} only involves one update step, we can simplify the notation by dropping the index $k$ and analyzing $\btheta''=\btheta'-\alpha \big(\nabla L(\btheta')\big)^\top \bxi \bxi$ for one data point $(\bX,Y)$ and independent $\bxi\sim \mN(0,I_d).$ With $A':=\E\big[(\btheta'-\btheta_\star)(\btheta'-\btheta_\star)^\top\big]$ and $A'':=\E\big[(\btheta''-\btheta_\star)(\btheta''-\btheta_\star)^\top\big]$, we then have to prove that 
\begin{equation*}
    \begin{aligned}
        A''=&(\bI_d-\alpha\Sigma)A'(\bI_d-\alpha\Sigma)+3\alpha^2\Sigma A'\Sigma+2\alpha^2\mathbb{E}\big[(\btheta'-\btheta_{\star})^\top\Sigma(\btheta'-\btheta_{\star})\big]\Sigma+2\alpha^2\Sigma\\
        &+2\alpha^2\tr\big(\Sigma A'\Sigma\big)\bI_d+\alpha^2\mathbb{E}\big[(\btheta'-\btheta_{\star})^\top\Sigma(\btheta'-\btheta_{\star})\big]\tr\big(\Sigma\big)\bI_d+\alpha^2\tr(\Sigma)\bI_d.
    \end{aligned}
\end{equation*}
Substituting the update rule \eqref{Eq: Update rule} in $A_k$ gives by the linearity of the transpose that
\begin{equation}\label{Eq: Equation for A_k}
\begin{aligned}
    A''&=\mathbb{E}\big[(\btheta''-\btheta_{\star})(\btheta''-\btheta_{\star})^\top\big]\\
    &=\mathbb{E}\bigg[\Big(\btheta'-\alpha\big(\nabla L(\btheta')\big)^\top \bxi\bxi-\btheta_{\star}\Big)\Big(\btheta'-\alpha\big(\nabla L(\btheta')\big)^\top \bxi\bxi-\btheta_{\star}\Big)^\top\bigg]\\
    &=A' -\alpha\mathbb{E}\bigg[\Big(\btheta-\btheta_{\star}\Big)\Big(\big(\nabla L(\btheta')\big)^\top \bxi \bxi\Big)^\top\bigg]-\alpha\mathbb{E}\bigg[\Big(\big(\nabla L(\btheta')\big)^\top \bxi \bxi\Big)\Big(\btheta'-\btheta_{\star}\Big)^\top\bigg]\\
    &\quad+\mathbb{E}\bigg[\Big(\alpha\big(\nabla L(\btheta')\big)^\top \bxi \bxi\Big)\Big( \alpha\big(\nabla L(\btheta')\big)^\top \bxi \bxi\Big)^\top\bigg].
\end{aligned}
\end{equation}
First, consider the terms with the minus sign in the above expression. The random vector $\bxi$ is independent of all other randomness and hence $\mathbb{E}_{\bxi}\big[\big(\nabla L(\btheta')\big)^\top \bxi\bxi\big]=\nabla L(\btheta').$ Moreover, together with \eqref{eq.grad_cond},
\begin{align*}
    \mathbb{E}\bigg[\Big(\big(\nabla L(\btheta')\big)^\top \bxi \bxi\Big)\Big(\btheta'-\btheta_{\star}\Big)^\top \, \bigg|\, \btheta'\bigg]
    = \mathbb{E}\big[\nabla L(\btheta') \, \big|\, \btheta'\big] (\btheta'-\btheta_{\star})^\top
    =   \Sigma (\btheta'-\btheta_\star)(\btheta'-\btheta_{\star})^\top.
\end{align*}
Taking the transpose and tower rule, we find
\begin{align}\label{Eq: Bound on (I)}
    \begin{split}
    &-\alpha\mathbb{E}\bigg[\Big(\btheta-\btheta_{\star}\Big)\Big(\big(\nabla L(\btheta')\big)^\top \bxi \bxi\Big)^\top\bigg]-\alpha\mathbb{E}\bigg[\Big(\big(\nabla L(\btheta')\big)^\top \bxi \bxi\Big)\Big(\btheta'-\btheta_{\star}\Big)^\top\bigg]\\
        &=-\alpha\mathbb{E}\big[(\btheta'-\btheta_{\star})(\btheta'-\btheta_{\star})^\top\big]\Sigma-\alpha\Sigma\mathbb{E}\big[(\btheta'-\btheta_{\star})(\btheta'-\btheta_{\star})^\top\big].
    \end{split}    
\end{align}

In a next step, we derive an expression for $\mathbb{E}\Big[\Big(\alpha\big(\nabla L(\btheta')\big)^\top \bxi \bxi\Big)\Big( \alpha\big(\nabla L(\btheta')\big)^\top \bxi \bxi\Big)^\top\Big]$. Since $\bxi\sim\mN(0,\bI_d)$ is independent of $\nabla L(\btheta')$ we can apply Lemma \ref{L: terms with quadratic normal random variables} to derive
\begin{equation}\label{Eq: gradient product term expectation}
    \begin{aligned}
        &\mathbb{E}\bigg[\Big(\alpha\big(\nabla L(\btheta')\big)^\top \bxi \bxi\Big)\Big( \alpha\big(\nabla L(\btheta')\big)^\top \bxi \bxi\Big)^\top\bigg]\\
        &=\alpha^2\mathbb{E}\bigg[\Big(\big(\nabla L(\btheta')\big)^\top \bxi \Big)^2\bxi\bxi^\top\bigg]\\
        &=2\alpha^2\mathbb{E}\Big[\big(\nabla L(\btheta')\big)\big(\nabla L(\btheta')\big)^\top\Big]+\alpha^2\mathbb{E}\Big[\big(\nabla L(\btheta')\big)^\top\big(\nabla L(\btheta')\big)\Big]\bI_d \\
        &= 2\alpha^2\mathbb{E}\Big[\big(\nabla L(\btheta')\big)\big(\nabla L(\btheta')\big)^\top\Big]+\alpha^2\tr\bigg(\mathbb{E}\Big[\big(\nabla L(\btheta')\big)\big(\nabla L(\btheta')\big)^\top\Big]\bigg)\bI_d.
    \end{aligned}
\end{equation}
 Arguing as for \eqref{eq.grad_Ltheta} gives $\nabla L(\btheta')
=-  \epsilon \bX- \bX \bX^\top(\btheta_\star-\btheta')$ and this yields
\begin{equation*}
    \begin{aligned}
       \mathbb{E}\Big[\big(\nabla L(\btheta')\big)\big(\nabla L(\btheta')\big)^\top\Big]= \mathbb{E}\bigg[\mathbb{E}_{\epsilon}\Big[\big( \epsilon \bX+ \bX \bX^\top(\btheta_\star-\btheta')\big)\big( \epsilon \bX+ \bX \bX^\top(\btheta_\star-\btheta')\big)^\top\Big]\bigg].
    \end{aligned}
\end{equation*}
Because $\epsilon$ has mean zero and variance one and is independent of $(\bX,\btheta')$, we conclude that
\begin{equation}\label{Eq: gradient product term post epsilon}
    \begin{aligned}
     \mathbb{E}\Big[\big(\nabla L(\btheta')\big)\big(\nabla L(\btheta')\big)^\top\Big]&=\mathbb{E}\Big[\big(\bX \bX^\top(\btheta_\star-\btheta')\big)\big(\bX \bX^\top(\btheta_\star-\btheta')\big)^\top+\bX\bX^\top\Big]\\
       &=\mathbb{E}\Big[ \big(\bX^\top(\btheta_\star-\btheta')\big)^2\bX\bX^\top \Big]+\Sigma,
    \end{aligned}
\end{equation}
where for the last equality we used that $\bX^\top(\btheta_\star-\btheta')$ is a scalar and that $\bX\sim\mN(0,\Sigma)$. 
Since $\bX\sim\mN(0,\Sigma)$ is independent of $\btheta'$ we get by Lemma \ref{L: terms with quadratic normal random variables} that
\begin{equation*}
    \begin{aligned}
        &\mathbb{E}\Big[ \big(\bX^\top(\btheta_\star-\btheta')\big)^2\bX\bX^\top \Big]=2\Sigma\mathbb{E}\big[(\btheta'-\btheta_{\star})(\btheta'-\btheta_{\star})^\top\big]\Sigma+\mathbb{E}\big[(\btheta'-\btheta_{\star})^\top\Sigma(\btheta'-\btheta_{\star})\big]\Sigma.
    \end{aligned}
\end{equation*}
Substituting this in \eqref{Eq: gradient product term post epsilon} and \eqref{Eq: gradient product term expectation} yields
\begin{equation}\label{Eq: Equation for (II)}
    \begin{aligned}
    &\mathbb{E}\bigg[\Big(\alpha\big(\nabla L(\btheta')\big)^\top \bxi \bxi\Big)\Big( \alpha\big(\nabla L(\btheta')\big)^\top \bxi \bxi\Big)^\top\bigg]\\
        &=4\alpha^2\Sigma\mathbb{E}\big[(\btheta'-\btheta_{\star})(\btheta'-\btheta_{\star})^\top\big]\Sigma+2\alpha^2\mathbb{E}\big[(\btheta'-\btheta_{\star})^\top\Sigma(\btheta'-\btheta_{\star})\big]\Sigma+2\alpha^2\Sigma\\
        &+2\alpha^2\tr\Big(\Sigma\mathbb{E}\big[(\btheta'-\btheta_{\star})(\btheta'-\btheta_{\star})^\top\big]\Sigma\Big)\bI_d+\alpha^2\tr\Big(\mathbb{E}\big[(\btheta'-\btheta_{\star})^\top\Sigma(\btheta'-\btheta_{\star})\big]\Sigma\Big)\bI_d\\
        &+\alpha^2\tr(\Sigma)\bI_d.
    \end{aligned}
\end{equation}
Combining \eqref{Eq: Equation for A_k} with \eqref{Eq: Bound on (I)} and \eqref{Eq: Equation for (II)} yields the statement of the theorem.
\end{proof}

\subsection{Proof of Theorem \ref{T:Convergence bound}}

For two vectors $\bu,\bv$ of the same length, $\tr(\bu\bv^\top)=\bu^\top\bv.$ Thus,
$\mathbb{E}\big[\|\btheta_{k}-\btheta_{\star}\|_2^2\big]=\tr\big(\mathbb{E}\big[(\btheta_k-\btheta_{\star})(\btheta_k-\btheta_{\star})^\top\big]\big).$
Together with Theorem \ref{T:Covariance}, $\tr(\bI_d)=d$ and $\tr(AB)=\tr(BA)$ for square matrices $A$ and $B$ of the same size, this yields
\begin{equation}\label{Eq: Risk as trace of Theorem 3.2}
    \begin{aligned}
       \mathbb{E}\big[\|\btheta_{k}-\btheta_{\star}\|_2^2\big]
       &=\tr\Big((\bI_d-\alpha_{k}\Sigma)\mathbb{E}\big[(\btheta_{k-1}-\btheta_{\star})(\btheta_{k-1}-\btheta_{\star})^\top\big](\bI_d-\alpha_k\Sigma)\Big)\\
        &\quad +3\alpha_k^2\tr\Big(\Sigma \mathbb{E}\big[(\btheta_{k-1}-\btheta_{\star})(\btheta_{k-1}-\btheta_{\star})^\top\big]\Sigma\Big)\\&\quad +2\alpha_{k}^2\tr\Big(\mathbb{E}\big[(\btheta_{k-1}-\btheta_{\star})^\top\Sigma(\btheta_{k-1}-\btheta_{\star})\big]\Sigma\Big)+2\alpha_{k}^2\tr\big(\Sigma\big)\\
        &\quad +2\alpha_k^2\tr\Big(\Sigma \mathbb{E}\big[(\btheta_{k-1}-\btheta_{\star})(\btheta_{k-1}-\btheta_{\star})^\top\big]\Sigma\Big)\tr\big(\bI_d\big)\\ 
        &\quad +\alpha_{k}^2\mathbb{E}\big[(\btheta_{k-1}-\btheta_{\star})^\top\Sigma(\btheta_{k-1}-\btheta_{\star})\big]\tr\big(\Sigma\big)\tr\big(\bI_d\big)\\
        &\quad +\alpha_k^2\tr(\Sigma)\tr\big(\bI_d\big)\\
        &=\mathbb{E}\big[(\btheta_{k-1}-\btheta_{\star})^\top(\bI_d-2\alpha_{k}\Sigma)^\top(\btheta_{k-1}-\btheta_{\star})\big]\\
        &\quad +2(d+2)\alpha_k^2\tr\Big(\Sigma \mathbb{E}\big[(\btheta_{k-1}-\btheta_{\star})(\btheta_{k-1}-\btheta_{\star})^\top\big]\Sigma\Big)\\&\quad +(d+2)\alpha_k^2\Big(\mathbb{E}\big[(\btheta_{k-1}-\btheta_{\star})^\top\Sigma(\btheta_{k-1}-\btheta_{\star})\big]\tr\big(\Sigma\big)+\tr\big(\Sigma\big)\Big).
    \end{aligned}
\end{equation}
If $\lambda$ is an eigenvalue of $\Sigma,$ then $(1-2\alpha_{k}\lambda)$ is an eigenvalue of $\bI_d-2\alpha_{k}\Sigma$.  By assumption, $0<\alpha_{k}\leq \lambda_{\min}(\Sigma)/\big(2\|\Sigma\|_S^2\big)\leq 1/\big(2\lambda_{\max}(\Sigma)\big)$ and therefore the matrix $\bI_d-2\alpha_{k}\Sigma$ is positive semi-definite and $(1-2\alpha_{k}\lambda_{\min}(\Sigma))$ is the largest eigenvalue.

For a positive semi-definite matrix $A$ and a vector $\bv,$ the min-max theorem states that $\bv^\top A\bv\leq \lambda_{\max}(A)\|\bv\|_2^2=\|A\|_S\|\bv\|_2^2.$ Using that for a vector $\bx$ it holds that $\tr(\bx\bx^\top)=\bx^\top\bx$, with $\bx=\Sigma(\btheta_{k-1}-\btheta_{\star})$ in \eqref{Eq: Risk as trace of Theorem 3.2} and applying $\bv^\top A\bv\leq\|A\|_S\|\bv\|_2^2$ with $\bv=\btheta_{k-1}-\btheta_{\star}$ and $A\in \{\Sigma,\bI_d-2\alpha_k\Sigma,\Sigma^2\}$, yields
\begin{equation*}
    \begin{aligned}
        \mathbb{E}\big[\|\btheta_{k}-\btheta_{\star}\|_2^2\big]
        &\leq \big(1-2\alpha_{k}\lambda_{\min}(\Sigma)\big) \mathbb{E}\big[\|\btheta_{k-1}-\btheta_{\star}\|_2^2\big]\\
        &+(d+2)\alpha_k^2\bigg(\tr(\Sigma)\|\Sigma\|_S\mathbb{E}\big[\|\btheta_{k-1}-\btheta_{\star}\|_2^2\big]+2\|\Sigma\|_S^2\mathbb{E}\big[\|\btheta_{k-1}-\btheta_{\star}\|_2^2\big]+\tr(\Sigma)\bigg).
    \end{aligned}
\end{equation*}
The spectral norm of a positive semi-definite matrix is equal to the largest eigenvalue and so $\tr(\Sigma)=\sum_{i=1}^d\lambda_i\leq d\lambda_{\max}=d\|\Sigma\|_S.$ Therefore,
\begin{equation*}
    \begin{aligned}
        &\mathbb{E}\big[\|\btheta_{k}-\btheta_{\star}\|_2^2\big]\leq\Big(1-2\alpha_k\lambda_{\min}(\Sigma)+\|\Sigma\|_S^2(d+2)^2\alpha_k^2\Big)\mathbb{E}\big[\|\btheta_{k-1}-\btheta_{\star}\|_2^2\big]+\|\Sigma\|_S(d+2)^2\alpha_k^2.
    \end{aligned}
\end{equation*}
 Using that $\alpha_k\leq \lambda_{\min}(\Sigma)/\big(\|\Sigma\|_S^2(d+2)^2\big)$ yields
\begin{equation*}
    \begin{aligned}
        &\mathbb{E}\big[\|\btheta_{k}-\btheta_{\star}\|_2^2\big]\leq \big(1-\alpha_k\lambda_{\min}(\Sigma)\big)\mathbb{E}\big[\|\btheta_{k-1}-\btheta_{\star}\|_2^2\big]+\|\Sigma\|_S(d+2)^2\alpha_k^2.
    \end{aligned}
\end{equation*}
Rewritten in non-recursive form, we obtain
\begin{equation}\label{Eq: Nonrecursive Bound on MSE}
    \begin{aligned}
        \mathbb{E}\big[\|\btheta_{k}-\btheta_{\star}\|_2^2\big]\leq &\mathbb{E}\big[\|\btheta_{0}-\btheta_{\star}\|_2^2\big]\prod_{\ell=1}^k \big(1-\alpha_{\ell}\lambda_{\min}(\Sigma)\big)\\
        &+\|\Sigma\|_S(d+2)^2\sum_{m=0}^{k-1}\alpha_{k-m}^2\prod_{\ell=k-m+1}^{k}\big(1-\alpha_{\ell}\lambda_{\min}(\Sigma)\big),
    \end{aligned}
\end{equation}
where we use the convention that the (empty) product over zero terms is given the value $1.$
For ease of notation define $c_d:=a\kappa^2(\Sigma)(d+2)^2,$ with condition number $\kappa(\Sigma)=\|\Sigma\|_S/\lambda_{\min}(\Sigma).$ From the definition of $\alpha_k$, \eqref{Eq: Combined choice of alpha_k}, it follows that $\alpha_k=\frac{a}{\lambda_{\min}(\Sigma)}\cdot \frac{1}{k+c_d}.$ Using that for all real numbers $x$ it holds that $1+x\leq e^x$, we get that for all integers $k^*< k,$
\begin{equation}\label{Eq: Bound on the product term alternative}
    \begin{aligned}
        &\prod_{\ell=k^*}^{k}\big(1-\alpha_{\ell}\lambda_{\min}(\Sigma)\big)\leq \exp\Bigg(-\lambda_{\min}(\Sigma)\sum_{\ell=k^*}^k\alpha_{\ell}\Bigg)=\exp\Bigg(-a\sum_{\ell=k^*}^k\frac{1}{\ell+c_d}\Bigg).
    \end{aligned}
\end{equation}
The function $x\mapsto 1/(x+c)$ is monotone decreasing for $x>0$ and $c\geq 0$ and thus,
\begin{equation}\label{Eq: Lower bound on the sum in the exponential alternative}
\begin{aligned}
    \sum_{\ell=k^*}^k\frac{1}{\ell+c_d}
    &\geq \sum_{\ell=k^*}^k\int_{\ell}^{\ell+1}\frac{1}{x+c_d}dx \\
    &= \int_{k^*}^{k+1}\frac{1}{x+c_d}dx\\
    &=\log(k+1+c_d)-\log(k^*+c_d)\\
    &=\log\Big(\frac{k+1+c_d}{k^*+c_d}\Big). 
\end{aligned}
\end{equation}
Using \eqref{Eq: Bound on the product term alternative} and \eqref{Eq: Lower bound on the sum in the exponential alternative} with $k^*=1$ gives
\begin{equation}\label{Eq: Bound on the product for the MSE(btheta0) term Alternative}
    \begin{aligned}
    \prod_{\ell=1}^k \big(1-\alpha_{\ell}\lambda_{\min}(\Sigma)\big)\leq \exp\bigg(-a\log\Big(\frac{k+1+c_d}{1+c_d}\Big)\bigg)=\bigg(\frac{1+c_d}{k+1+c_d}\bigg)^a.
    \end{aligned}
\end{equation}

Using \eqref{Eq: Bound on the product term alternative} and \eqref{Eq: Lower bound on the sum in the exponential alternative} with $k^*=k-m+1$ gives
\begin{equation}\label{Eq: Bound on the sum product term alternative}
    \begin{aligned}
        &\sum_{m=0}^{k-1}\alpha_{k-m}^2\prod_{\ell=k-m+1}^{k}\big(1-\alpha_{\ell}\lambda_{\min}(\Sigma)\big)\\
        &\leq \frac{a^2}{\lambda^2_{\min}(\Sigma)}\sum_{m=0}^{k-1}\frac{1}{\big((k-m)+c_d\big)^2}\Bigg(\frac{k-m+1+c_d}{k+1+c_d}\Bigg)^a\\
        &=\frac{a^2}{\lambda^2_{\min}(\Sigma)(k+1+c_d)^a}\sum_{m=0}^{k-1}\frac{\big(k-m+1+c_d\big)^a}{\big((k-m)+c_d\big)^2}\\
        &=\frac{a^2}{\lambda^2_{\min}(\Sigma)(k+1+c_d)^a}\sum_{m=1}^{k}\frac{\big(m+1+c_d\big)^a}{\big(m+c_d\big)^2}.
    \end{aligned}
\end{equation}

Observe that $c_d=a\kappa^2(\Sigma)(d+2)^2\geq a$. This gives us that $c_d+1\leq (1+1/a)c_d$ and thus $m+1+c_d\leq (1+1/a)(m+c_d)$. For all real numbers $x,$ $(1+x)\leq e^x$ and thus $(1+1/a)^a\leq e.$ Therefore,
\begin{equation}\label{Eq: Step in alternative bound extra term}
    \begin{aligned}
       &\sum_{m=1}^{k}\frac{\big(m+1+c_d\big)^a}{\big(m+c_d\big)^2}\leq e\sum_{m=1}^{k}\big(m+c_d\big)^{a-2}.    \end{aligned}
\end{equation}

For $p>0,$ the function $x\mapsto (x+c)^p$ is monotone increasing for $x,c>0,$ Hence,
    \begin{equation*}
    \begin{aligned}
      \sum_{\ell=1}^k(\ell+c)^{p}
      &\leq \sum_{\ell=1}^{k}\int_{\ell}^{\ell+1}(x+c)^{p}dx\\
      &=\int_1^{k+1}(x+c)^{p}dx\\
      &=\frac{(k+1+c)^{p+1}}{p+1}-\frac{(1+c)^{p+1}}{p+1} \\
      &\leq \frac{(k+1+c)^{p+1}}{p+1}.
    \end{aligned}
    \end{equation*}
Since $a>2,$ we can apply this with $p=a-2>0,$ to find
\begin{equation*}
    e\sum_{m=1}^{k}\big(m+c_d\big)^{a-2}\leq e\frac{(k+1+c_d)^{a-1}}{a-1}.
\end{equation*}
Combining \eqref{Eq: Nonrecursive Bound on MSE}, \eqref{Eq: Bound on the product for the MSE(btheta0) term Alternative}, \eqref{Eq: Bound on the sum product term alternative} and \eqref{Eq: Step in alternative bound extra term} finally gives
\begin{equation*}
\begin{aligned}
        \mathbb{E}[\|\btheta_k-\btheta_{\star}\|_2^2]&\leq \Bigg(\frac{1+a\kappa^2(\Sigma)(d+2)^2}{k+1+a\kappa^2(\Sigma)(d+2)^2}\Bigg)^a\mathbb{E}[\|\btheta_{0}-\btheta_{\star}\|_2^2]\\
        &\quad +\frac{ea^2\kappa(\Sigma)(d+2)^2}{\lambda_{\min}(\Sigma)\big(a-1\big)\big(k+1+a\kappa^2(\Sigma)(d+2)^2\big)}.
    \end{aligned}
\end{equation*}
Using that $0<a/(a-1)<2$ for $a>2,$ now yields the result.
\qed

\bibliographystyle{acm}
\bibliography{AlternativeOptimationforRegressionLibrary}

\end{document}